\definecolor{darkred}{rgb}{0.9,0.1,0.1}
\newtheorem{theorem}{Theorem}[section]
\newtheorem{lemma}{Lemma}[section]
\newtheorem{remark}{Remark}[section]
{\rm}
\definecolor{darkred}{rgb}{0.9,0.1,0.1}
\def\eps{\varepsilon}
\def\eps{\varepsilon}
\begin{document}

\title{Periodic homogenization of convolution type operators with heavy tails}

\author{ Andrey Piatnitski$^{1,2}$%\footnote{Corresponding author, ORCID
%0000-0002-9874-6227}
\ , Elena Zhizhina$^{1,2}$}{\small

 \maketitle

    \begin{quote}
1) The Arctic University of Norway, campus Narvik, PO Box 385, Narvik 8505, Norway
\\[2mm]
2) Higher School of Modern Mathematics MIPT, 1st Klimentovskiy per.,  Moscow, Russia, 115184.
\\[2mm]
Emails:  apiatnitski@gmail.com, elena.jijina@gmail.com
\end{quote}}

\begin{abstract}
  The paper deals with periodic homogenization of nonlocal symmetric convolution type operators in $L^2(\mathbb R^d)$,
  whose kernel is the product of a density that belongs to the domain of attraction of an $\alpha$-stable law
  and a rapidly oscillating positive periodic function.
   Assuming that  the local oscillation of the said density satisfies a proper upper bound at infinity,
we prove homogenization result for the studied family of operators.
\end{abstract}

\section{Introduction}
The paper deals with homogenization problem for nonlocal convolution-type operators
whose convolution kernel $p(x-y)$ is non-negative, symmetric $p(x-y) = p(y-x)$ and has heavy tails so that the second moment of this kernel is not finite
on the one hand, and this kernel is not of the form  $|x-y|^{-d-\alpha}$,  on the other hand.
The corresponding operator reads
\begin{equation}\label{intr_Le}
L^{\eps} u(x) = \frac{1}{\eps^{d+\alpha}} \int_{\mathbb{R}^d} p\big(\frac{x-y}{\eps}\big) \Lambda \big(\frac{x}{\eps}, \frac{y}{\eps} \big) \big( u(y) - u(x) \big) dy, \quad 0<\alpha < 2, \quad \eps \in (0,1),
\end{equation}
here $\eps$ is a small positive parameter, and our goal is to study the asymptotic behaviour of the resolvent
$(-L^\eps+m)^{-1}$ as $\eps\to0$ for a fixed $m>0$.
We assume that $p(z)\in L^1(\mathbb R^d)$ and that the  function $\frac1{\eps^{d+\alpha}}p\big(\frac z\eps\big)$  approximates at infinity in a weak sense a function of the form $k\big(\frac z{|z|}\big) |z|^{-d-\alpha}$,
where $k(s)$ is a positive symmetric continuous function on the sphere $S^{d-1}$.
It should be noted that $p(z)$ may show rather irregular behaviour in the vicinity of zero.
The coefficient $\Lambda(x,y) = \Lambda(y,x)$ is positive, symmetric, bounded and periodic in both variables.
Under these assumptions and  an additional assumption that the local oscillation of $p(z)$ decays at infinity faster than $p(z)$, we show that the family of operators $-L^\eps+m$ admits homogenization, that is the resolvent   $(-L^\eps+m)^{-1}$
converges strongly in $L^2(\mathbb R^d)$ as $\eps\to0$ to the resolvent $(-L^{\rm eff}+m)^{-1}$ of the effective operator $L^{\rm eff}$ that reads
\begin{equation}\label{intr_L_eff}
L^{\rm eff} u(x) =  {\rm p. \, v.} \, \int_{\mathbb{R}^d} \frac{\overline{\Lambda}k\big(\frac{x-y}{|x-y|}\big)\big( u(y) - u(x) \big)}
{|x-y|^{d+\alpha}} dy,
\end{equation}
where $\overline{\Lambda}>0$ is the mean value of $\Lambda(\cdot)$.

Notice that our  main result remains valid if  $p(z) \in L^1(\mathbb{R}^d)$  is comparable at infinity with the function
$\frac{L(|z|)}{|z|^{d+\alpha}}$, where $L(r)$ is a slowly varying function, see Remark \ref{SV} for the details.

 It is interesting to observe that, while the operators $L^\eps$ are bounded in $L^2(\mathbb{R}^d)$, the limit operator $L^{\rm eff}$ is unbounded and has a domain $D(L^{\rm eff})\subset H^\frac\alpha2(\mathbb R^d)$.
  %that is dense in $L^2(\mathbb{R}^d)$.

In the existing literature the homogenization problem for operators
\begin{equation}\label{prev_eq}
L^\eps  u(x) =  {\rm p. \, v.} \, \int_{\mathbb{R}^d} \frac{ \Lambda \big(\frac{x}{\eps}, \frac{y}{\eps} \big) \big( u(y) - u(x) \big) }{|x-y|^{d+\alpha}}dy
\end{equation}
with a bounded positive coefficient $\Lambda$ was studied in \cite{KPZ}. It was shown that both for periodic
and statistically homogeneous symmetric functions $\Lambda$ the homogenization result holds for the family $\{L^\eps\}$ as $\eps\to0$, and the effective operator is of the form \eqref{intr_L_eff} with $k(s)=1$ and $\overline{\Lambda}>0$ being the mean
value of $\Lambda$. Under additional regularity assumptions, similar result holds in the case of non-symmetric periodic $\Lambda$, if $0<\alpha<1$. However,
in this case the effective coefficient is not the average of $\Lambda$, it depends on the kernel of the adjoint periodic operator.

If $1\leqslant\alpha<2$, the homogenization result for the corresponding parabolic equations holds in moving coordinates.
For operators of the (non-divergence) form
$$
L^{\rm eff} u(x) =  \int_{\mathbb{R}^d} \frac{\Lambda \big(\frac{x}{\eps}, \frac{y}{\eps} \big)
\big( u(y) - u(x)- \mathbf{1}_{\{|x-y|<\eps\}}\nabla u(x) \big)}
{|x-y|^{d+\alpha}} dy
$$
 with non-symmetric kernels
this result was established in \cite{CCKW1}.

The homogenization problems for symmetric pure jump d-dimensional L$\acute{e}$vy processes were studied in \cite{SU}, where the limit process has been defined using Mosco convergence.

% Results concerning the convergence rates of the
An important issue in homogenization of  stable-like operators in periodic environments is obtaining estimates for the rate of convergence. This issue was partially addressed
% homogenization for stable-like operators in periodic environments have been under consideration
 in the recent works \cite{PSSZ} and \cite{CCKW2}.
The work \cite{CCKW2} focuses on quantitative homogenization of symmetric stable-like operators defined in \eqref{prev_eq}.
 The authors consider the equation $-L^\eps u+mu=h$ and  for the right-hand sides $h$ satisfying certain regularity conditions obtain sharp in order estimates
for the rate of resolvent convergence in the strong norm.
Similar sharp in order estimates in the operator norm for the generic right-hand side and the generic coefficient satisfying the uniform ellipticity condition
were established  in \cite{PSSZ}.
% the analogous estimates have been proved in the operator norm and under more general assumptions on the coefficients of the operator.

 It is worth noting that in the existing works the authors studied the homogenization of stable-like processes. In the present paper we consider the convolution type operators with integrable kernels that belong to the domain of attraction of some stable law.
 In \cite{PZ17} we studied the homogenization problem for convolution type operators whose jumping kernels  have finite second moment. Using the corrector technique we proved that the limiting operator is a second order elliptic differential  operator with a constant positive definite effective matrix. Therefore, in this case the limiting process is diffusive. In the present work we also study convolution type operators, but now the jumping kernel has infinite second moment and the corresponding measure $p(z)dz$ belongs to the domain of attraction of a stable law, see e.g. \cite{Kol}, \cite{UZ} for further details. We prove that in this case, the effective process is a symmetric $\alpha$ - stable Levy process.
Unlike to the paper \cite{PZ17}, where the corrector technique  was exploited, the approach used here relies on the compactness arguments.

The paper consists of Introduction and two sections. In the first section we provide a problem setup and formulate
our main homogenization results.
The second section is devoted to the proof of the main result. We obtain a priori estimates, establish compactness
results, and, in Subsection \ref{ss_conv_loc}, prove the strong resolvent convergence in the $L^2_{\rm loc}(\mathbb R^d)$ topology.
Finally, in Subsection \ref{ss_conv}, we show that the convergence takes place in $L^2(\mathbb R^d)$ topology.

\section{Problem Setup and Main Theorem}

We consider an operator of the form
\begin{equation}\label{Le}
L^{\eps} u(x) = \frac{1}{\eps^{d+\alpha}} \int_{\mathbb{R}^d} p\big(\frac{x-y}{\eps}\big) \Lambda \big(\frac{x}{\eps}, \frac{y}{\eps} \big) \big( u(y) - u(x) \big) dy, \quad 0<\alpha < 2, \quad \eps \in (0,1),
\end{equation}
where  $ p (z)\in L^1(\mathbb{R}^d)$ is a non-negative function that satisfies the following symmetry  condition: $p(z)= p(-z)$ for all $z\in\mathbb R^d$. Without loss  of generality we assume that
 $\int_{\mathbb{R}^d} p(z) dz = 1$.  Then
 $p(z)$ is the density of a symmetric distribution on $\mathbb R^d$.

Let $k: \, S^{d-1} \to \mathbb{R}_+$ be a continuous symmetric positive function: $k(-s) = k(s)$ and $k(s)>0$ for
all $s\in S^{d-1}$; $d_S$ denotes the Lebesgue measure on the sphere $S^{d-1}$.
We assume that the measure with density $p(\cdot)$ belongs to the domain of attraction of a symmetric $\alpha$-stable law, that means
\begin{equation}\label{pe}
\min\{ 1, \, |z|^2 \} \, \frac{1}{\eps^{d+\alpha}} \, p\big(\frac{z}{\eps}\big) \, dz \ \to \ \min\{ 1, \, |z|^2 \} \, \frac{ d|z|}{|z|^{1+\alpha}} \, k (\tilde z) \, d_S \tilde z, \quad   \eps \to 0, \quad \tilde z = \frac{z}{|z|} \in S^{d-1},
\end{equation}
where convergence is in the weak sense, see e.g.  \cite{Kol} (Sect. 8.3).  Remind, that the weak convergence of measures $\mu_n \to \mu$ as $n \to \infty$ means that $(f, \mu_n) \to (f, \mu)$ for any $f \in C_b(\mathbb{R}^d)$.

We suppose in this paper that the density $p(z) \in  L^1(\mathbb{R}^d)$ satisfies the following conditions:
\begin{itemize}
\item[1)]
\begin{equation}\label{bas_prop_p}
p(z)\geqslant 0, \quad  p(-z)=p(z) \hbox{ for all } z\in\mathbb R^d, \quad  \int_{\mathbb{R}^d} p(z) dz = 1,
\end{equation}
\item[2)] for almost all $z$ such that $|z|\geqslant M$
\begin{equation}\label{M-1}
\frac{\beta_1}{|z|^{d+\alpha}} \le p(z) \le \frac{\beta_2}{|z|^{d+\alpha}}, % \qquad |z|>M,
\end{equation}
with positive constants $\beta_1, \; \beta_2$ and $M \ge 1$.
\item[3)]
For an arbitrary open subset $\Omega$ of $S^{d-1}$ with a boundary of Lebesgue measure zero:
\begin{equation}\label{M-2}
\int_{|z|>n} \int_{\tilde z \in \Omega} p(z) \, dz \sim \frac{1}{\alpha \, n^\alpha} \int_\Omega k(s) \, ds, \quad n \to \infty,
\end{equation}
where  $\tilde z = \frac{z}{|z|} \in \Omega \subset S^{d-1}$, and the symbol $"\sim"$ means that the ratio of the two sides of this formula tends to one as $n \to \infty$.
\item[4)]  There exists a constant $K>0$ such that
\begin{equation}\label{M-3}
\mathop{\rm ess\,sup}\limits_{{|\gamma| \le K}\atop{|z|\geqslant r}} \frac{|p(z+\gamma) - p(z)|}{p(z)} \, \to \, 0 \quad \mbox{as } \quad r \to \infty,
 \end{equation}
% with a given constant. % that will be chosen later.
\end{itemize}
It is convenient to introduce a function $\phi_K(r)$ defined by
\begin{equation}\label{def_phidec}
  \phi_K(r)=%\sup\limits_{|z|\geqslant r}\sup\limits_{|\gamma| \le K}
  \mathop{\rm ess\,sup}\limits_{{|\gamma| \le K}\atop{|z|\geqslant r}}  \frac{|p(z+\gamma) - p(z)|}{p(z)}.
\end{equation}
According to \eqref{M-3} %and \eqref{def_phidec}
the function $ \phi_K(\cdot)$ decreases and tends to zero at infinity.

Notice that condition $p  \in  L^1(\mathbb{R}^d)$ implies that the corresponding measure is absolutely continuous w.r.t. the Lebesgue measure and has no atoms.   Observe also that  relation  \eqref{M-3} holds for any $K>0$ if and only if
it holds for some $K>0$.  %3) implies that the holds with any constant $K>0$.

It follows from \eqref{M-1} and \eqref{M-2} that
\begin{equation}\label{estbeta}
  \beta_1\leqslant k(s)\leqslant \beta_2 \quad\hbox{for all }s\in S^{d-1}.
\end{equation}

If the density $p$ satisfies condition \eqref{M-2}, then \eqref{pe} holds, see Proposition 8.3.1, \cite{Kol}. The additional conditions \eqref{M-1} and \eqref{M-3} are imposed in order to make the homogenization result hold.

%has a "star shape"  at infinity, i.e. there exist a bounded positive function $0<k_1\le k(s)\le k_2< \infty, \ s %\in S^{d-1},$ and a constant $M>0$ such that $p(\cdot)$ can be represented as
%\begin{equation*}
%p(z) \  = \ \frac{K(\frac{z}{|z|}, |z|)}{|z|^{d+\alpha}} \quad \mbox{for all } \; |z|>M,
%\end{equation*}
%where
%\begin{equation*}
%0<\tilde k_1\le K(s,r)\le \tilde k_2< \infty \quad \mbox{and } \quad \sup\limits_{s \in S^{d-1}} |K(s, r) - k(s)| %\to 0 \quad \mbox{as } \ r \to \infty.
%\end{equation*}

About $\Lambda$ we assume that $ \Lambda (x, y) = \Lambda (y, x)$ is a symmetric periodic function  such that
\begin{equation}\label{Lambda}
0 < \gamma_1 \le \Lambda(x,y) \le \gamma_2 < \infty.
\end{equation}
Without loss of generality we assume $\Lambda(x,y)$ has a period $[0,1)^{2d}$. In what follows  we identify the period of $\Lambda$ with the torus $\mathbb{T}^{2d}$.
Then $-L^\eps$ for every $\eps \in (0,1)$ is a bounded positive self-adjoint operator in $L^2(\mathbb{R}^d)$.
% with domain ${\cal D}(L^\eps) = L^2(\mathbb{R}^d)$.

For $m>0$ denote by $u^{\eps}  \in   L^2(\mathbb{R}^d)$ the solution of equation
\begin{equation}\label{Ue}
-L^{\eps} u^{\eps} + m u^{\eps} = f \quad \mbox{with } \; f \in L^2(\mathbb{R}^d), %\; \; m>0,
\end{equation}
and by $u  \in   L^2(\mathbb{R}^d)$ the solution of equation
\begin{equation}\label{U-limit}
-L^{0} u  + m u = f \quad \mbox{with the same } \; f \in L^2(\mathbb{R}^d),% \; \; m>0,
\end{equation}
where
\begin{equation}\label{L0-limit}
L^{0} u(x) = \int_{\mathbb{R}^d} \Lambda^{\rm eff}(x,y) \, \frac{( u(y) - u(x))}{|y-x|^{d+\alpha}} dy, \quad 0<\alpha < 2;
\end{equation}
with
\begin{equation}\label{L0limit}
\Lambda^{\rm eff}(x,y) = \overline{\Lambda} \, k\big( \frac{x-y}{|x-y|} \big), \qquad \overline{\Lambda} = \int_{\mathbb{T}^d}\int_{\mathbb{T}^d} \Lambda(x,y) dx dy.
\end{equation}
It should be emphasized that the operator $-L^0$ is an unbounded non-negative self-adjoint operator in $L^2 (\mathbb R^d)$.
It corresponds to the quadratic form
$$
a^0(u)=\frac12 \int_{\mathbb{R}^d} \int_{\mathbb{R}^d} \Lambda^{\rm eff}(x,y) \, \frac{( u(y) - u(x))^2}{|y-x|^{d+\alpha}} dydx.
$$
Due to \eqref{estbeta} this form is comparable to the form
$\int_{\mathbb R^d}\frac{(u(x)-u(y))^2}{|x-y|^{d+\alpha}}dx$. Therefore, the form $a^0(u)$ with the domain $H^{\frac\alpha2}(\mathbb R^d)$ is closed.  As a consequence, the  domain $D(-L^0)$ of $-L^0$ belongs to $H^\frac\alpha2(\mathbb R^d)$ and is dense in this space  (and in  $L^2(\mathbb{R}^d)$).

Our main result is the following theorem.
\begin{theorem}\label{T1}
Let  conditions \eqref{bas_prop_p} - \eqref{M-3} be fulfilled,  and assume that \eqref{Lambda} holds true. Then for each $f \in L^2(\mathbb{R}^d)$ the solution $ u^{\eps}$ of \eqref{Ue} converges strongly in $L^2(\mathbb{R}^d)$ to the solution $u$ of \eqref{U-limit} - \eqref{L0limit}.
\end{theorem}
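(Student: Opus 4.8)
The plan is to prove Theorem~\ref{T1} via the standard triad of \emph{a priori estimates}, \emph{compactness}, and \emph{identification of the limit}, working first in the local topology $L^2_{\rm loc}(\mathbb R^d)$ and then upgrading to strong convergence in $L^2(\mathbb R^d)$. First I would record the energy identity for \eqref{Ue}: testing $-L^\eps u^\eps + m u^\eps = f$ against $u^\eps$ yields
\begin{equation*}
\frac{1}{2\eps^{d+\alpha}}\int_{\mathbb R^d}\int_{\mathbb R^d} p\Big(\tfrac{x-y}{\eps}\Big)\Lambda\Big(\tfrac x\eps,\tfrac y\eps\Big)\big(u^\eps(y)-u^\eps(x)\big)^2\,dy\,dx + m\|u^\eps\|_{L^2}^2 = (f,u^\eps),
\end{equation*}
so $\|u^\eps\|_{L^2}\le m^{-1}\|f\|_{L^2}$ and the nonlocal Dirichlet form of $u^\eps$ is bounded uniformly in $\eps$. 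Using the lower bound $\gamma_1\beta_1$ on $\eps^{-d-\alpha}p(\frac{\cdot}{\eps})\Lambda$ on the region $|x-y|\ge M\eps$ — which follows from \eqref{M-1} and \eqref{Lambda} — I would compare this form from below with the Gagliardo form $\int\!\!\int_{|x-y|\ge M\eps}\frac{(u^\eps(y)-u^\eps(x))^2}{|x-y|^{d+\alpha}}\,dy\,dx$, obtaining a uniform bound on a truncated $H^{\alpha/2}$-type seminorm; this is the key a priori estimate that supplies compactness.

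Next, from the uniform bound on the truncated Gagliardo seminorm together with the $L^2$ bound, I would extract (along a subsequence) a limit $u_0\in H^{\alpha/2}(\mathbb R^d)$ with $u^\eps\to u_0$ strongly in $L^2_{\rm loc}$; strong local compactness is available because the truncation at scale $M\eps$ becomes harmless in the limit (the small-scale part of the form is controlled and the function $p$ is integrable, so the cutoff contributes a vanishing error). The heart of the matter, to be carried out in Subsection~\ref{ss_conv_loc}, is passing to the limit in the weak formulation: for a test function $\varphi\in C_c^\infty(\mathbb R^d)$ one must show
\begin{equation*}
\frac{1}{\eps^{d+\alpha}}\int\!\!\int p\Big(\tfrac{x-y}{\eps}\Big)\Lambda\Big(\tfrac x\eps,\tfrac y\eps\Big)\big(u^\eps(y)-u^\eps(x)\big)\varphi(x)\,dy\,dx \ \longrightarrow\ \int\!\!\int \frac{\overline\Lambda\,k\big(\tfrac{x-y}{|x-y|}\big)\big(u_0(y)-u_0(x)\big)}{|x-y|^{d+\alpha}}\varphi(x)\,dy\,dx.
\end{equation*}
Here three mechanisms combine: the periodic coefficient $\Lambda(\frac x\eps,\frac y\eps)$ averages to $\overline\Lambda$ by weak convergence of rapidly oscillating periodic functions; the rescaled kernel $\eps^{-d-\alpha}p(\frac z\eps)$ converges in the weak sense of \eqref{pe}--\eqref{M-2} to $k(\tilde z)|z|^{-d-\alpha}$; and the difference quotient $u^\eps(y)-u^\eps(x)$ must be replaced by $u_0(y)-u_0(x)$. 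Separating the integral into a near-diagonal region $|x-y|<\delta$, a bulk region $\delta\le|x-y|\le R$, and a tail $|x-y|>R$, the tail is small by \eqref{M-1} and the $L^2$ bound, the near-diagonal region is small by the uniform Gagliardo bound (for $\alpha<2$ this is where $\delta\to0$ kills the singular part uniformly), and on the bulk region one can freeze the kernel and the coefficient on a mesoscopic mesh and use the local $L^2$ convergence of $u^\eps$ together with the weak convergence \eqref{pe} of the kernel and the averaging of $\Lambda$. The role of condition \eqref{M-3}, i.e.\ the decay of $\phi_K$, is precisely to control the discrepancy caused by the fact that in $p(\frac{x-y}{\eps})\Lambda(\frac x\eps,\frac y\eps)$ the two arguments of $\Lambda$ are not independent: shifting $y$ by a mesh cell of size $K\eps$ changes $p(\frac{x-y}{\eps})$ only by a factor $1+\phi_K(r)$ on $|x-y|\ge r\eps$, which lets one decouple the $\frac y\eps$-dependence of $\Lambda$ from that of $p$ and carry out the ergodic averaging.

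Once $u_0$ is identified as the (unique) solution $u$ of \eqref{U-limit}--\eqref{L0limit} — uniqueness holding because $-L^0+m$ is an invertible self-adjoint operator, the whole sequence converges, not just a subsequence. The final step, Subsection~\ref{ss_conv}, is to promote $L^2_{\rm loc}$ convergence to $L^2(\mathbb R^d)$ convergence. I would do this by a tightness/energy argument: multiply \eqref{Ue} by $u^\eps \chi_R^2$ with $\chi_R$ a smooth cutoff vanishing on $B_R$ and equal to $1$ outside $B_{2R}$, and use the symmetry of the kernel to estimate $m\int |u^\eps|^2\chi_R^2$ by a boundary term involving the Dirichlet form of $u^\eps$ localized to the annulus plus $\int_{|x|\ge R}|f||u^\eps|$; both go to zero as $R\to\infty$ uniformly in $\eps$, so no $L^2$ mass escapes to infinity. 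Combined with $\|u^\eps\|_{L^2}\to\|u\|_{L^2}$ (which follows from passing to the limit in the energy identity, using that the limiting form is lower semicontinuous and matches the effective energy) this yields strong $L^2(\mathbb R^d)$ convergence. I expect the main obstacle to be the limit passage in the bulk region of the weak formulation — specifically, making rigorous the simultaneous use of the weak convergence \eqref{pe} of the heavy-tailed kernel and the periodic homogenization of $\Lambda$ when the two are entangled through the shared variable $x-y$; this is exactly what \eqref{M-3} is designed to handle, and getting the mesoscopic scale separation $K\eps \ll \delta \ll 1$ quantitatively correct is the delicate point.
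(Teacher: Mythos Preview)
Your proposal is correct and follows essentially the same route as the paper: energy estimates give the $L^2$ bound and the uniform truncated Gagliardo bound, Kolmogorov--Riesz yields $L^2_{\rm loc}$ compactness, the limit is identified by a near-diagonal/bulk/far splitting of the weak form with condition \eqref{M-3} used exactly as you describe to decouple $p$ from $\Lambda$ on a mesoscopic mesh (this is the paper's Lemma \ref{WeakC}), and the upgrade to $L^2(\mathbb R^d)$ is done by testing against a cut-off times $u^\eps$. One small correction to your final step: the tightness estimate (no mass at infinity, uniformly in $\eps$) together with $L^2_{\rm loc}$ convergence already gives strong $L^2(\mathbb R^d)$ convergence, so the separate claim $\|u^\eps\|_{L^2}\to\|u\|_{L^2}$ is unnecessary---and your stated justification for it (lower semicontinuity of the limiting form) would only give an inequality, not equality.
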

\medskip

\begin{remark}\label{SV}
 The statement of Theorem \ref{T1} remains valid if  $p(z) \in L^1(\mathbb{R}^d)$  is comparable at infinity with the function
$\frac{L(|z|)}{|z|^{d+\alpha}}$, where $L(r)$ is a slowly varying function. Recall that a positive function $L(r)$, defined for $r \ge 0$, is said to be slowly varying if, for all $g>0$, $\lim_{r \to \infty} \frac{L(r g)}{L(r)} =1$.
It is known, see e.g. \cite{Feller}, that the measures $p(z) dz$
%of the form $\frac{L(|z|)}{|z|^{d+\alpha}}$, where $L(r)$ is a slowly varying function,
belong to the domain of attraction of $\alpha$-stable law. The corresponding distributions converge to the $\alpha$-stable distribution when scaled with a factor of  $a_n = n^\alpha \, L(n)$, not $n^\alpha$. Thus,  we have to modify the definition of operator $L^\eps$ taking in \eqref{intr_Le} and in \eqref{Le} the scaling factor $\frac{1}{\eps^{d+\alpha} \, L(\frac1\eps)}$ instead of $\frac{1}{\eps^{d+\alpha}}$. Also,  assumptions \eqref{M-1} - \eqref{M-2} should be rearranged in this case as follows:
\begin{equation}\label{M-1bis}
\beta_1 \frac{L(|z|)}{|z|^{d+\alpha}} \le p(z)  \le  \beta_2 \frac{L(|z|)}{|z|^{d+\alpha}},  \qquad |z|>M;
\end{equation}
%For an arbitrary open subset $\Omega$ of $S^{d-1}$ with a boundary of Lebesgue measure zero:
\begin{equation}\label{M-2bis}
\int_{|z|>n} \int_{\tilde z \in \Omega} p(z) \, dz \ \sim \  \frac{L(n)}{\alpha \, n^\alpha} \int_\Omega k(s) \, ds, \quad n \to \infty.
\end{equation}
Then the arguments used  in the proof of Theorem \ref{T1} remain valid, and for  the functions $p(z)$ satisfying assumptions \eqref{bas_prop_p}, \eqref{M-3}, \eqref{M-1bis}, \eqref{M-2bis}, the statement of Theorem \ref{T1} holds.
\end{remark}

\section{Proof of the theorem}

\subsection{A priory estimates}

\begin{proof} We start the proof of the theorem with a priory estimates.
Multiplying equation \eqref{Ue} by $u^{\eps}$ and integrating the resulting relation over $\mathbb{R}^d$ we
obtain
$$
m(u^{\eps}, u^{\eps}) = (f, u^{\eps}) + ( L^{\eps} u^{\eps},u^{\eps}).
$$
Since
\begin{equation}\label{C1-bef}
 (-L^{\eps} u^{\eps}, u^{\eps}) =\frac{1}{2\eps^{d+\alpha}} \int_{\mathbb{R}^d} p\big(\frac{x-y}{\eps}\big) \Lambda \big(\frac{x}{\eps}, \frac{y}{\eps} \big) \big( u(y) - u(x) \big)^2 dy\geqslant 0,
 \end{equation}
then
\begin{equation}\label{C1}
 \|u^\eps\| \le \frac1m \| f\| =: C_1,
\end{equation}
with a constant $C_1 = C(f)$ that does not depend on $\eps$, and in what follows we will use the notation  $\| \cdot \| = \| \cdot \|_{L^2(\mathbb{R}^d)}$.
Thus the family of functions $\{ u^\eps\}$ is bounded in $ L^2(\mathbb{R}^d)$.
Moreover,  \eqref{Ue} and bound \eqref{C1} yield
\begin{equation}\label{C2}
\big(-L^{\eps} u^\eps, u^\eps \big) \le \|f\| \, \|u^\eps\| \le \frac1m \| f\|^2 =: C_2,
\end{equation}
and inequality \eqref{C2} together with \eqref{Lambda} and \eqref{C1-bef} imply that
\begin{equation}\label{C3}
\frac{1}{\eps^{d+\alpha}} \int\limits_{\mathbb{R}^d} \int\limits_{\mathbb{R}^d} p \big(\frac{x-y}{\eps}\big)  \big( u^\eps(y) - u^\eps(x) \big)^2 dx dy \le C_3.
\end{equation}
In the proof of estimates \eqref{C1-bef} - \eqref{C2} we use only the symmetry of functions $p(z)$ and $\Lambda(x,y)$.

Moreover, inequality \eqref{C3} together with a lower bound in \eqref{M-1} yield  the following uniform in $\eps$ estimate:
\begin{equation}\label{C4}
\begin{array}{l}
\displaystyle
 \int\limits_{|x-y|> M\eps} \frac{( u^\eps(y) - u^\eps(x))^2}{|x-y|^{d+\alpha}} dx dy = \frac{1}{\varepsilon^{d+\alpha}}\int\limits_{\mathbb{R}^d}  \int\limits_{|z|> M\eps} \frac{( u^\eps(x- z) - u^\eps(x))^2}{\big|\frac{z}{\varepsilon}\big|^{d+\alpha}} dx dz
 \\[4mm] \displaystyle
 \le \frac{\beta_1^{-1}}{\varepsilon^{d+\alpha}}\int\limits_{\mathbb{R}^d}  \int\limits_{|x-y|> M\eps}  p \big(\frac{x-y}{\eps}\big) \, ( u^\eps(y) - u^\eps(x))^2 \, dx dy  \le C_4,
 \end{array}
\end{equation}
where $M$ is the same constant as in \eqref{M-1}.

\subsection{Compactness results}
 
We are going to show that any sequence  $\{ u^{\eps_j}\}$, $\eps_j \to 0$ as $j\to\infty$,  is compact in $ L^2_{\rm loc}(\mathbb{R}^d)$. 
As follows from the Kolmogorov-Riesz  compactness theorem, a subset $S$ of $L^2_{\rm loc}(\mathbb{R}^d)$
is compact in $ L^2_{\rm loc}(\mathbb{R}^d)$, if 
%Here we
%use the Riesz criterium for compactness {\color{blue} that gives us the following sufficient condition for compactness  of a subset $S$ of $L^2_{\rm loc}(\mathbb{R}^d)$ }:
\begin{equation}\label{Riesz}
S \subset L^2(\mathbb{R}^d) \; \mbox{is bounded, \  and } \; \lim\limits_{|h| \to 0+} \sup\limits_{f\in S} %\int\limits_{\mathbb{R}^d}
\int\limits_{G}  |f(x+h) - f(x)|^2 dx=0
\end{equation}
for any bounded $G \subset \mathbb{R}^d$.  The boundedness of the family  $\{ u^{\eps_j}\}$ in 
$L^2(\mathbb R^d)$  is a direct consequence of  a priory estimate \eqref{C1}. 
To obtain  the second relation in  \eqref{Riesz} we first prove the following lemma.

\begin{lemma}\label{L1}
%Let $G \subset \mathbb{R}^d$ be an arbitrary bounded domain and
For any $h \in  \mathbb{R}^d$

1) If $|h|>3M \eps$, then
\begin{equation}\label{L1-1}
 \int\limits_{\mathbb{R}^d} (u^\eps(x+h) - u^\eps(x))^2 dx \le c_1 |h|^\alpha.
\end{equation}

2) If $|h|< 3M \eps$, then
\begin{equation}\label{L1-2}
 \int\limits_{\mathbb{R}^d} (u^\eps (x+h) - u^\eps(x))^2 dx \le c_2 \eps^\alpha.
\end{equation}
\end{lemma}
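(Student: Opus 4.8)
The plan is to obtain both \eqref{L1-1} and \eqref{L1-2} from the single a priori bound \eqref{C4} by an elementary averaging (difference--quotient) argument. Write $g_\eps(h):=\int_{\mathbb R^d}\big(u^\eps(x+h)-u^\eps(x)\big)^2\,dx$; note $0\le g_\eps(h)\le 4\|u^\eps\|^2<\infty$ for every $h$ and that $h\mapsto g_\eps(h)$ is continuous. Passing to the variable $w=x-y$ and using Tonelli, estimate \eqref{C4} becomes
\[
\int_{|w|>M\eps}\frac{g_\eps(w)}{|w|^{d+\alpha}}\,dw\le C_4 .
\]
Next, for \emph{any} $z\in\mathbb R^d$ decompose $u^\eps(x+h)-u^\eps(x)=\big(u^\eps(x+h)-u^\eps(x+z)\big)+\big(u^\eps(x+z)-u^\eps(x)\big)$, square, use $(a+b)^2\le2a^2+2b^2$, integrate in $x$ and translate, to get $g_\eps(h)\le 2g_\eps(h-z)+2g_\eps(z)$. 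Averaging over $z$ in a cube $Q$ and substituting $w=h-z$ in the first integral yields the key inequality
\[
g_\eps(h)\le\frac{2}{|Q|}\Big(\int_{Q}g_\eps(w)\,dw+\int_{h-Q}g_\eps(w)\,dw\Big).
\]

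The heart of the argument is to choose $Q$, in each of the two regimes, so that $Q\cup(h-Q)$ is squeezed inside an annulus $\{c\,r\le|w|\le C\,r\}$ that avoids the ball $\{|w|\le M\eps\}$, where \eqref{C4} carries no information. For $|h|>3M\eps$ I would take $Q$ to be the cube of side $|h|/\sqrt d$ (hence half--diagonal $|h|/2$) centered at $2h$: then every $z\in Q$ satisfies $\tfrac32|h|\le|z|\le\tfrac52|h|$ and $\tfrac12|h|\le|z-h|\le\tfrac32|h|$, so $Q\cup(h-Q)\subset\{\tfrac12|h|\le|w|\le\tfrac52|h|\}\subset\{|w|>M\eps\}$, with $|Q|=d^{-d/2}|h|^d$ and $r=|h|$. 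For $|h|<3M\eps$ I would instead take $Q$ to be the cube of side $M\eps/\sqrt d$ centered at the point $6M\eps\,e_1$ (any fixed direction): then every $z\in Q$ satisfies $\tfrac{11}{2}M\eps\le|z|\le\tfrac{13}{2}M\eps$ and, since $|h|\le3M\eps$, also $\tfrac52 M\eps\le|z-h|\le\tfrac{19}{2}M\eps$, so $Q\cup(h-Q)\subset\{\tfrac52 M\eps\le|w|\le\tfrac{19}{2}M\eps\}\subset\{|w|>M\eps\}$, with $|Q|=d^{-d/2}(M\eps)^d$ and $r=M\eps$.

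To conclude in either case, let $Cr$ be the outer radius of the annulus. On $Q\cup(h-Q)$ we have $|w|^{-d-\alpha}\ge(Cr)^{-d-\alpha}$, hence, using that this region lies in $\{|w|>M\eps\}$ together with the rewritten \eqref{C4},
\[
\int_{Q\cup(h-Q)}g_\eps(w)\,dw\le(Cr)^{d+\alpha}\int_{|w|>M\eps}\frac{g_\eps(w)}{|w|^{d+\alpha}}\,dw\le(Cr)^{d+\alpha}C_4 ,
\]
and combining with $|Q|=d^{-d/2}r^d$ in the key inequality gives $g_\eps(h)\le 2d^{d/2}C^{d+\alpha}C_4\,r^\alpha$. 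For $r=|h|$ this is \eqref{L1-1}; for $r=M\eps$ it is \eqref{L1-2} (absorbing $M^\alpha$ into $c_2$). I do not expect a serious difficulty: the only step requiring care is the elementary geometry of the previous paragraph --- checking that the translated cube $h-Q$ stays inside the desired annulus uniformly in the admissible $h$ --- while the rest is the classical averaging trick, here adapted to the fact that \eqref{C4} controls only the part $|x-y|>M\eps$ of the $H^{\alpha/2}$--seminorm, which is precisely why the bound saturates at scale $\eps$ once $|h|\lesssim\eps$.
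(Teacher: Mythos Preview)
Your proposal is correct and uses essentially the same averaging (difference--quotient) idea as the paper: introduce an auxiliary cube of the appropriate scale, split $u^\eps(x+h)-u^\eps(x)$ through $u^\eps(x+z)$, average in $z$, and compare with the partial $H^{\alpha/2}$--seminorm bound \eqref{C4}. The only noteworthy difference is in part~2): the paper does not rerun the averaging at scale $\eps$ but instead reduces directly to part~1) by inserting a single intermediate shift $h_0$ with $|h_0|\sim M\eps$ chosen so that both $|h_0|>3M\eps$ and $|h-h_0|>3M\eps$, then applies \eqref{L1-1} twice; your uniform treatment of both cases is equally valid and arguably cleaner, while the paper's shortcut is slightly more economical.
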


\begin{proof} %In the proof we omit the superscript $\eps$ in the notation $u^\eps$.
Denote by $\Gamma_h$ a cube centered at $\frac{h}{2}$ with a side $\frac{|h|}{3}$. Then
\begin{equation}\label{proof-1}
\frac13|h| \le |z| \le \frac23 |h|, \quad \mbox{for } \; \; z \in \Gamma_h,
\end{equation}
and we have
\begin{equation}\label{PL1-1}
\begin{array}{l}
\displaystyle
 \int\limits_{\mathbb{R}^d} (u^\eps(x+h) - u^\eps(x))^2 dx = \frac{1}{|\Gamma_h|}  \int\limits_{\Gamma_h} \int\limits_{\mathbb{R}^d} (u^\eps(x+h) - u^\eps(x))^2 dx dz
\\[3mm] \displaystyle
\le  \frac{2}{|\Gamma_h|} \int\limits_{\mathbb{R}^d} \int\limits_{\Gamma_h}  (u^\eps(x+h) - u^\eps(x+z))^2  dz dx +  \frac{2}{|\Gamma_h|} \int\limits_{\mathbb{R}^d} \int\limits_{\Gamma_h}  (u^\eps(x+z) - u^\eps(x))^2  dz dx.
\end{array}
\end{equation}
It is clear that both integrals can be estimated in the same way, and we consider the second one. If  $|h|>3M \eps$, then $|z| \ge M \eps$ for all $z\in \Gamma_h$. Using \eqref{PL1-1} together with \eqref{C4} and \eqref{proof-1} we get \eqref{L1-1}:
$$
\frac{2}{|\Gamma_h|} \int\limits_{\mathbb{R}^d} \int\limits_{\Gamma_h}  (u^\eps(x+z) - u^\eps(x))^2  dz dx
\le \frac{C_{d, \alpha}|h|^{d+\alpha}}{|h|^d} \int\limits_{\mathbb{R}^d} \int\limits_{|z|\ge M \eps} \frac{ (u^\eps(x+z) - u^\eps(x))^2}{|z|^{d+\alpha}}  dz dx \le \frac12 c_1 |h|^\alpha,
$$
where constant $c_1$ depends on $d$ and $\alpha$.

If  $|h|< 3M \eps$, we take $h_0=k \eps$ with such $k\in\mathbb R^d$ that $|h_0|> 3M \eps$ and  $|h-h_0|> 3M \eps$, for example, $|h_0|= 7M \eps$. Then using inequality \eqref{L1-1} we obtain \eqref{L1-2}:
$$
\begin{array}{l} \displaystyle
 \int\limits_{\mathbb{R}^d}  (u^\eps(x+h) - u^\eps(x))^2  dx \le  2\int\limits_{\mathbb{R}^d} (u^\eps(x+h) - u^\eps(x+h_0))^2 dx + 2\int\limits_{\mathbb{R}^d} (u^\eps(x+h_0) - u^\eps(x))^2 dx \\[3mm] \le  c_1 |h_0-h|^\alpha + c_1 |h_0|^\alpha \le c_2 \eps^\alpha.
 \end{array}
$$
Lemma is proved.
\end{proof}

The next lemma provides a result on compactness in $L^2_{\rm loc}(\mathbb{R}^d)$ for a sequence $\{u^{\varepsilon_j}\}$ with $\varepsilon_j \to 0$.

\begin{lemma}\label{L2}
Any sequence $\{u^{\varepsilon_j}\}$ with $\varepsilon_j \to 0$ is compact in $L^2_{\rm loc}(\mathbb{R}^d)$.
Moreover, any limit point of this family is an element of $H^\frac\alpha2(\mathbb R^d)$.
\end{lemma}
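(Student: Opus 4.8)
The plan is to derive the compactness assertion from the Kolmogorov--Riesz criterion \eqref{Riesz} together with the translation estimates of Lemma \ref{L1}, and to obtain the $H^{\frac\alpha2}$ regularity of any limit point from the a priori bound \eqref{C4} and Fatou's lemma.

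First I would check the equicontinuity-in-translation condition in \eqref{Riesz} for the set $S=\{u^{\eps_j}\}$ and an arbitrary bounded $G\subset\mathbb R^d$. Fix $\delta>0$. Using Lemma \ref{L1} I pick $h_0>0$ so small that $c_1|h_0|^\alpha<\delta$ and $c_2\big(|h_0|/(3M)\big)^\alpha<\delta$. Since $\eps_j\to0$, all but finitely many indices satisfy $\eps_j<|h_0|/(3M)$; for any such $j$ and any $|h|\le|h_0|$ one of the two alternatives of Lemma \ref{L1} applies: if $|h|>3M\eps_j$ then \eqref{L1-1} gives $\int_{\mathbb R^d}(u^{\eps_j}(x+h)-u^{\eps_j}(x))^2\,dx\le c_1|h|^\alpha\le c_1|h_0|^\alpha<\delta$, while if $|h|\le 3M\eps_j$ then \eqref{L1-2} gives the bound $c_2\eps_j^\alpha<c_2\big(|h_0|/(3M)\big)^\alpha<\delta$. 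For each of the remaining finitely many indices $u^{\eps_j}$ is a fixed element of $L^2(\mathbb R^d)$, so continuity of translations in $L^2$ provides $h_j>0$ with $\int_G(u^{\eps_j}(x+h)-u^{\eps_j}(x))^2\,dx<\delta$ for $|h|<h_j$. Taking the minimum of $h_0$ and of these finitely many $h_j$ shows that $\sup_j\int_G(u^{\eps_j}(x+h)-u^{\eps_j}(x))^2\,dx<\delta$ for all sufficiently small $|h|$; as $\delta$ is arbitrary, \eqref{Riesz} follows. Together with the uniform bound \eqref{C1}, the Kolmogorov--Riesz theorem yields compactness of $\{u^{\eps_j}\}$ in $L^2_{\rm loc}(\mathbb R^d)$.

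Next I would take a limit point $u$ and a subsequence (still denoted $\{u^{\eps_j}\}$) with $u^{\eps_j}\to u$ in $L^2_{\rm loc}(\mathbb R^d)$; passing to a further subsequence I may assume $u^{\eps_j}\to u$ almost everywhere in $\mathbb R^d$. Applying Fatou's lemma to $\|u^{\eps_j}\|_{L^2(B_R)}^2\le C_1^2$ and letting $R\to\infty$ shows $u\in L^2(\mathbb R^d)$ with $\|u\|\le C_1$. To see that $u\in H^{\frac\alpha2}(\mathbb R^d)$ I use the equivalence of the $H^{\frac\alpha2}$ norm with $\|u\|_{L^2}^2+\int_{\mathbb R^d}\int_{\mathbb R^d}\frac{(u(x)-u(y))^2}{|x-y|^{d+\alpha}}\,dx\,dy$ and estimate \eqref{C4}: for a fixed $\rho>0$ and every $j$ with $M\eps_j<\rho$ one has $\int_{|x-y|>\rho}\frac{(u^{\eps_j}(y)-u^{\eps_j}(x))^2}{|x-y|^{d+\alpha}}\,dx\,dy\le C_4$, so Fatou's lemma gives $\int_{|x-y|>\rho}\frac{(u(y)-u(x))^2}{|x-y|^{d+\alpha}}\,dx\,dy\le C_4$, and letting $\rho\to0$ by monotone convergence yields $\int_{\mathbb R^d}\int_{\mathbb R^d}\frac{(u(y)-u(x))^2}{|x-y|^{d+\alpha}}\,dx\,dy\le C_4$. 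Hence the Gagliardo seminorm of $u$ is finite and $u\in H^{\frac\alpha2}(\mathbb R^d)$.

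I expect the only real subtlety to be the first step. Because the bound \eqref{L1-2} for translations below the oscillation scale is of order $\eps^\alpha$ rather than $|h|^\alpha$, the family $\{u^{\eps_j}\}$ is not uniformly equicontinuous in translation in the naive sense, and one is forced to split off the finitely many indices with comparatively large $\eps_j$ and handle them individually via $L^2$-continuity of translation of a single function. Once the hypotheses of the Kolmogorov--Riesz theorem are verified, the identification of limit points with elements of $H^{\frac\alpha2}(\mathbb R^d)$ is a routine Fatou argument based on \eqref{C1} and \eqref{C4}.
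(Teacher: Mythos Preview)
Your proof is correct and follows essentially the same approach as the paper's: both verify the Kolmogorov--Riesz criterion by combining the two translation estimates of Lemma~\ref{L1}, splitting off the finitely many indices with comparatively large $\eps_j$ and handling those via $L^2$-continuity of translation, and both obtain the $H^{\frac\alpha2}$ regularity of a limit point by passing to an a.e.\ convergent subsequence and applying Fatou's lemma to the uniformly bounded truncated Gagliardo integrals from \eqref{C4}. Your organization is in fact somewhat cleaner than the paper's, which carries a redundant case split on $\delta_1$ versus $\delta_2$, but the substance is identical.
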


\begin{proof}
Let us take a sequence $\{u^{\varepsilon_j}\}$ with $\varepsilon_j \to 0$.
Due to \eqref{Riesz} it is sufficient to show that
\begin{equation}\label{comp-1}
\forall \ \varkappa>0 \ \ \exists \ \delta>0 \ \ \hbox{s. t. } \forall \ |h|<\delta \ \mbox{ and } \ \forall \ \varepsilon_j \quad \int\limits_{\mathbb{R}^d}  (u^{\eps_j}(x+h) - u^{\eps_j}(x))^2  dx \le K \varkappa.
\end{equation}

For arbitrary $\varkappa>0$ we put $\delta_1 = 3M \varkappa^{1/\alpha}$  and take $\varepsilon_j$ such that $\varepsilon_j > \frac{\delta_1}{3M} = \varkappa^{1/\alpha}$. Since we have a finite set of such $\varepsilon_j$, then by the Riesz criterium we conclude that
\begin{equation}\label{comp-2}
\forall \ \varkappa>0 \ \ \exists \ \delta_2>0 \ \ \hbox{s. t. } \forall \ |h|<\delta_2  %\varepsilon_j>\varkappa^{1/\alpha}
 \quad \max\limits_{\{j:\varepsilon_j > \varkappa^{1/\alpha}\}} \int\limits_{\mathbb{R}^d}  (u^{\eps_j}(x+h) - u^{\eps_j}(x))^2  dx \le K \varkappa.
\end{equation}
Denote $\delta = \min \{ \delta_1, \ \delta_2 \}$.

1) If $\delta_2 > \delta_1$, then $|h|<\delta_1<\delta_2$.
According \eqref{L1-1} for $\varepsilon_j<\frac{|h|}{3M}$ we have
$$
 \int\limits_{\mathbb{R}^d}  (u^{\eps_j}(x+h) - u^{\eps_j}(x))^2  dx \le C_1 |h|^\alpha<C_1 \delta_1^\alpha=\tilde C_1 \varkappa.
$$
For  $\frac{|h|}{3M}< \varepsilon_j< \varkappa^{1/\alpha}$, using \eqref{L1-2} we get
$$
 \int\limits_{\mathbb{R}^d}  (u^{\eps_j}(x+h) - u^{\eps_j}(x))^2  dx \le C_2 \varepsilon_j^\alpha \le C_2 \varkappa.
$$
2) If $\delta_2 < \delta_1$, then $|h|<\delta_2<\delta_1$. For $\varepsilon_j < \frac{|h|}{3M}$ by \eqref{L1-1} we have
$$
 \int\limits_{\mathbb{R}^d}  (u^{\eps_j}(x+h) - u^{\eps_j}(x))^2  dx \le C_1 |h|^\alpha < C_1 \delta_1^\alpha=\tilde C_1 \varkappa.
$$
For  $\frac{|h|}{3M}< \varepsilon_j< \varkappa^{1/\alpha}$, using \eqref{L1-2} we get
$$
 \int\limits_{\mathbb{R}^d}  (u^{\eps_j}(x+h) - u^{\eps_j}(x))^2  dx \le C_2 \varepsilon_j^\alpha \le C_2 \varkappa.
$$
Thus for all $\varepsilon_j$ estimate \eqref{comp-1} holds.

We turn to the second statement of Lemma. In view of \eqref{M-1} and \eqref{C2} we have
$$
\int_{|x-y|>M\eps}\frac{(u^\eps(x)-u^\eps(y))^2}{|x-y|^{d+\alpha}}dxdy=\frac1{\eps^{d+\alpha}}
\int_{|x-y|>M\eps}\frac{(u^\eps(x)-u^\eps(y))^2}{|\frac{x-y}\eps|^{d+\alpha}}dxdy
$$
$$
\leqslant \int_{\mathbb R^{2d}} \bold{1}\big._{|x-y|>M\eps}(x-y)\frac1{\beta_1}p\Big(\frac{x-y}\eps\Big)
(u^\eps(x)-u^\eps(y))^2 dxdy\leqslant C.
$$
Consider an arbitrary limit point of the family ${u^{\eps_j}}$, denote it $\tilde u$. Then, for a subsequence,
$u^{\eps_j}$ converges to $\tilde u$ almost everywhere in $\mathbb R^d$. This implies that
$$
\bold{1}\big._{|x-y|>M\eps}(x-y)\frac{(u^\eps(x)-u^\eps(y))^2}{|x-y|^{d+\alpha}}
\mathop{\longrightarrow}\limits_{\eps_j\to0} \frac{(\tilde u(x)-\tilde u(y))^2}{|x-y|^{d+\alpha}}
$$
almost everywhere in $\mathbb R^{2d}$. Therefore, by the Fatou lemma,
$$
 \int_{\mathbb R^{2d}} \frac{(\tilde u(x)-\tilde u(y))^2}{|x-y|^{d+\alpha}} dxdy\leqslant C,
$$
which yields $\tilde u\in H^\frac\alpha2(\mathbb R^d)$.

\end{proof}

\begin{remark}
It is worth noting that in the %arguments proving the compactness of the sequence  $\{u^{\varepsilon_j}\}$ as $\varepsilon_j \to 0$,
proof of Lemma \ref{L2}
 we used only the lower bound in condition \eqref{M-1}.
\end{remark}

\subsection{Homogenization in $L^2_{\rm loc}(\mathbb{R}^d)$}\label{ss_conv_loc}

Therefore, for a subsequence, $u^{\varepsilon}$ converges strongly in $L^2_{\rm loc}(\mathbb{R}^d)$ to some function $u$, and the next step of the proof of the theorem is to characterize the function $u$. To do so we follow the same reasoning as in \cite{KPZ} with a suitable adaptation to our case.
We multiply $-L^\eps u^\eps + m u^\eps = f$ by a test function $\varphi \in C_0^\infty(\mathbb{R}^d )$ and integrate over $\mathbb{R}^d $. This yields
\begin{equation}\label{B1}
 \frac{1}{\eps^{d+\alpha}} \int\limits_{\mathbb{R}^d} \int\limits_{\mathbb{R}^d}
 p\big(\frac{x-y}{\eps}\big) \Lambda \big(\frac{x}{\eps}, \frac{y}{\eps} \big) ( u^\eps(y) - u^\eps(x) )
 (\varphi(y) - \varphi(x))dx dy + \int\limits_{\mathbb{R}^d}(m u^\eps \varphi - f \varphi) dx =0.
\end{equation}
Our goal is to pass to the limit as $\eps \to 0$ in \eqref{B1}.
The second integral in \eqref{B1} converges to the integral $\int_{\mathbb{R}^d}(m u \varphi - f \varphi) dx$. To study the first integral in \eqref{B1} we divide the integration over $\mathbb{R}^d \times \mathbb{R}^d$ into three parts:
$$\mathbb{R}^d \times \mathbb{R}^d = G_1^\delta \cup  G_2^\delta \cup  G_3^\delta,
$$
where
$$
G_1^\delta = \{ (x,y): \ |x-y|\ge \delta, \ |x|+|y|\le \delta^{-1} \},
$$
$$
G_2^\delta = \{ (x,y): \ |x-y|\le \delta, \ |x|+|y|\le \delta^{-1} \}, \quad G_3^\delta = \{ (x,y): \  |x|+|y|\ge \delta^{-1} \}.
$$

The integral over $G_2^\delta \cup  G_3^\delta$  for small enough $\delta>0$  is estimated using the Cauchy inequality and estimate \eqref{C3}:
\begin{equation}\label{B2}
\begin{array}{l}
\displaystyle
\Big|  \frac{1}{\eps^{d+\alpha}} \int\limits_{G_2^\delta \cup  G_3^\delta}
 p\big(\frac{x-y}{\eps}\big) \Lambda \big(\frac{x}{\eps}, \frac{y}{\eps} \big) ( u^\eps(y) - u^\eps(x) )
 (\varphi(y) - \varphi(x))dx dy \Big| \\
 \displaystyle
 \le \gamma_2 \Big( \frac{1}{\eps^{d+\alpha}} \int\limits_{G_2^\delta \cup  G_3^\delta}
 p\big(\frac{x-y}{\eps}\big) ( u^\eps(y) - u^\eps(x) )^2 dx dy \Big)^{1/2} \Big( \frac{1}{\eps^{d+\alpha}} \int\limits_{G_2^\delta \cup  G_3^\delta}
 p\big(\frac{x-y}{\eps}\big)
 (\varphi(y) - \varphi(x))^2 dx dy \Big)^{1/2} \\ \displaystyle
  \le \tilde C_1 \,  \Big( \frac{1}{\eps^{d+\alpha}} \int\limits_{G_2^\delta \cup  G_3^\delta}
 p\big(\frac{x-y}{\eps}\big)
 (\varphi(y) - \varphi(x))^2 dx dy \Big)^{1/2}.
%  \qquad  \Big( \int\limits_{G_2^\delta \cup  G_3^\delta} \frac{ (\varphi(y) - \varphi(x))^2}{|x-y|^{d+\alpha}} dx dy \Big)^{1/2}.
 \end{array}
\end{equation}
Since  $\varphi \in C_0^\infty(\mathbb{R}^d )$, we obtain  using \eqref{M-1} and estimate
$|\varphi(y) - \varphi(x)| \le \big(\max |\nabla \varphi|\big)\, |y-x|$
$$
\frac{1}{\eps^{d+\alpha}} \int\limits_{G_2^\delta }
 p\big(\frac{x-y}{\eps}\big)
 (\varphi(y) - \varphi(x))^2 dx dy=
 \frac{1}{\eps^{d+\alpha}} \int\limits_{G_2^\delta\cap\{|x-y|<M\eps\} }
 p\big(\frac{x-y}{\eps}\big)
 (\varphi(y) - \varphi(x))^2 dx dy
$$
$$
+\frac{1}{\eps^{d+\alpha}} \int\limits_{G_2^\delta\cap\{|x-y|>M\eps\} }
 p\big(\frac{x-y}{\eps}\big)
 (\varphi(y) - \varphi(x))^2 dx dy
$$
$$
\leqslant \frac{C_1\eps^2}{\eps^{d+\alpha}}
\int\limits_{\{|z|<M\eps\} }
 p\big(\frac{z}{\eps}\big)dz+\beta_2 \int\limits_{G_2^\delta} \frac{ (\varphi(y) - \varphi(x))^2}{|x-y|^{d+\alpha}} dx dy
$$
$$
\leqslant 2 M^2 C_1 \eps^{2-\alpha} \int\limits_{|u|<M} p(u) du + \beta_2 C_1 \int\limits_{|z|<\delta} \frac{dz}{|z|^{d+\alpha-2}}
 \leqslant  2 M^2 C_1\eps^{2-\alpha}+ \beta_2 C_1 \frac{1}{2-\alpha} \delta^{2-\alpha};
$$
here $C_1=\|\varphi\|^2_{C^1(\mathbb R^d)}\big(\sup\{|x|\,:\, \varphi(x)\not=0\}\big)^d$, where
$$\|\varphi\|_{C^1(\mathbb R^d)} = \max \Big\{ \|\varphi\|_{C(\mathbb R^d)}, \; \|\partial_j \varphi\|_{C(\mathbb R^d)}, \, j =1, \ldots, d \Big\}.$$

Since  $\varphi \in C_0^\infty(\mathbb{R}^d )$, then for sufficiently small $\delta>0$ the integration over $G_3^\delta $
is reduced to the integration over the sets
% only domains %of the form $\{|x|> \delta^{-1}, \, |y| \le C\}$,
%where $\varphi(y) - \varphi(x) \neq 0$, give the contribution to the integral over $G_3^\delta $. For small enough $\delta$ such domain in $G_3^\delta$ can be describe as follows:
$$
\{|x|> \delta^{-1}-C, \, |y| \le C\} \quad \mbox{and } \quad \{|y|> \delta^{-1}-C, \, |x| \le C\},
$$
where $C$ is a constant that depends on the ${\rm supp} \, \varphi$.
In these domains inequality \eqref{M-1} holds, and we get
$$
%\int\limits_{G_2^\delta} \frac{ (\varphi(y) - \varphi(x))^2}{|x-y|^{d+\alpha}} dx dy = O(\delta^{2-\alpha}), \qquad
\frac{1}{\eps^{d+\alpha}} \int\limits_{G_3^\delta }
 p\big(\frac{x-y}{\eps}\big)
 (\varphi(y) - \varphi(x))^2 dx dy\leqslant
\beta_2 \int\limits_{G_3^\delta} \frac{ (\varphi(y) - \varphi(x))^2}{|x-y|^{d+\alpha}} dx dy
$$
$$
\le 4 \beta_2  C_1 \int\limits_{|z|> \frac12 \delta^{-1}} \frac{dz}{|z|^{d+\alpha}}
=  O(\delta^{\alpha}).
$$
Consequently, the last integral in \eqref{B2} tends to 0 as $\delta \to 0$ and $\eps\to0$, and we get
\begin{equation}\label{B3}
\lim\limits_{\delta \to 0} \; \lim\limits_{\eps \to 0}  \Big|  \frac{1}{\eps^{d+\alpha}} \int\limits_{G_2^\delta \cup  G_3^\delta}
 p\big(\frac{x-y}{\eps}\big) \Lambda \big(\frac{x}{\eps}, \frac{y}{\eps} \big) ( u^\eps(y) - u^\eps(x) )
 (\varphi(y) - \varphi(x))dx dy \Big| = 0.
\end{equation}

Using the same reasoning for the solution $u$ of equation \eqref{U-limit} with $ \Lambda^{\rm eff}(x,y) = \overline{\Lambda} \, k\big( \frac{x-y}{|x-y|} \big)$ we conclude that
\begin{equation}\label{B3-limit}
\lim\limits_{\delta \to 0} \;   \Big|  \int\limits_{G_2^\delta \cup  G_3^\delta}
\frac{ \overline{\Lambda}\, k \big( \frac{x-y}{|x-y|} \big) ( u(y) - u(x) )}{|x-y|^{d+\alpha}}
 (\varphi(y) - \varphi(x))dx dy \Big| = 0.
\end{equation}

\medskip

We are left with analysing the behaviour of the integral
\begin{equation}\label{G1}
 \frac{1}{\eps^{d+\alpha}} \int\limits_{G_1^\delta}
 p\big(\frac{x-y}{\eps}\big) \Lambda \big(\frac{x}{\eps}, \frac{y}{\eps} \big) ( u^\eps(y) - u^\eps(x) )
 (\varphi(y) - \varphi(x))dx dy
\end{equation}
as $\varepsilon \to 0$.
 Since the function $\Lambda(x,y)$ is periodic the family $\Lambda^{\eps}(x,y) = \Lambda \big(\frac{x}{\eps}, \frac{y}{\eps} \big)$ converges weakly in $L^2_{\rm loc}(\mathbb{R}^{2d})$ to the mean  $\overline{\Lambda}$ of the function $\Lambda(x,y)$: $\overline{\Lambda} = \int\limits_{\mathbb{T}^d} \int\limits_{\mathbb{T}^d} \Lambda(x,y) dx dy$. In the next lemma we prove that  %under our assumptions \eqref{M-1} - \eqref{M-3}  on $p(\cdot)$
 the family of functions $  \frac{1}{\eps^{d+\alpha}}  p\big(\frac{x-y}{\eps}\big) \Lambda \big(\frac{x}{\eps}, \frac{y}{\eps} \big)$  %{\color{red} for a subsequence}
 converges weakly in $L^2 (G_1^\delta)$ to the function $\, \frac{\overline{\Lambda} \,k ( \frac{x-y}{|x-y|} )}{|x-y|^{d+\alpha}}$, as $\varepsilon \to 0$.

\begin{lemma}\label{WeakC}
Assume that  $p(\cdot) \in L^1(\mathbb{R}^d)$ meets all the conditions  \eqref{bas_prop_p} - \eqref{M-3}, and $ \Lambda (x, y)$ is a symmetric periodic function satisfying condition \eqref{Lambda}. Then,  for any  $\Psi \in L^2(G_1^\delta)$,
%{\color{red} for a subsequence} the following convergence holds:
\begin{equation}\label{WeC}
 \frac{1}{\eps^{d+\alpha}} \int\limits_{G_1^\delta}
 p\big(\frac{x-y}{\eps}\big) \Lambda \big(\frac{x}{\eps}, \frac{y}{\eps} \big) \Psi(x,y)\, dx dy \ \to \ \overline{\Lambda} \int\limits_{G_1^\delta} \frac{k \big( \frac{x-y}{|x-y|} \big)}{|x-y|^{d+\alpha}}  \Psi(x,y)\, dx dy, % \quad \Psi \in L^2(G_1^\delta)
\end{equation}
as $\varepsilon \to 0$.
\end{lemma}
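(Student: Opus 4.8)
The plan is to combine the weak convergence of the rescaled kernels $\eps^{-d-\alpha}p(\cdot/\eps)$ away from the diagonal with the classical periodic averaging of the oscillating factor $\Lambda(x/\eps,y/\eps)$; the crux is to show that these two oscillations at scale $\eps$ decouple in the limit. On $G_1^\delta$ one has $|x-y|\ge\delta$, so for $\eps<\delta/M$ the upper bound in \eqref{M-1} gives $\eps^{-d-\alpha}p\big(\frac{x-y}{\eps}\big)\le\beta_2|x-y|^{-d-\alpha}\le\beta_2\delta^{-d-\alpha}$, and together with \eqref{Lambda} the family $\eps^{-d-\alpha}p\big(\frac{x-y}{\eps}\big)\Lambda\big(\frac x\eps,\frac y\eps\big)$ is bounded in $L^\infty(G_1^\delta)$, hence --- $G_1^\delta$ being bounded --- in $L^2(G_1^\delta)$; the limit function in \eqref{WeC} lies in $L^2(G_1^\delta)$ as well. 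Therefore it is enough to prove \eqref{WeC} for $\Psi$ in a dense subset of $L^2(G_1^\delta)$, say $\Psi\in C_0^\infty(\mathrm{int}\,G_1^\delta)$, and then pass to arbitrary $\Psi$ by density using the uniform bounds. Fixing such a $\Psi$ and passing to the difference variable $z=x-y$, the left-hand side of \eqref{WeC} becomes
$$\frac{1}{\eps^{d+\alpha}}\int_{\mathbb R^d}p\Big(\frac z\eps\Big)\Big(\int_{\mathbb R^d}\Lambda\Big(\frac x\eps,\frac{x-z}{\eps}\Big)\Psi(x,x-z)\,dx\Big)dz,$$
where the inner integral extends over a bounded set and $z$ stays in a shell $\{\delta'\le|z|\le\delta^{-1}\}$ with some $\delta'>\delta$.

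Introduce $\Theta(\eta):=\int_{\mathbb T^d}\Lambda(\xi,\xi-\eta)\,d\xi$. By the periodicity of $\Lambda$ in each argument, $\xi\mapsto\Lambda(\xi,\xi-\eta)$ is $\mathbb Z^d$-periodic, so $\Theta$ is continuous and $\mathbb Z^d$-periodic, $\gamma_1\le\Theta\le\gamma_2$, and the substitution $\zeta=\xi-\eta$ gives $\int_{\mathbb T^d}\Theta(\eta)\,d\eta=\overline\Lambda$. The classical averaging property of periodic functions, applied for each fixed $z$ to the oscillating factor $\xi\mapsto\Lambda(\xi,\xi-z/\eps)$ (whose mean is $\Theta(z/\eps)$) against the slowly varying weight $x\mapsto\Psi(x,x-z)$, yields
$$\Big|\int_{\mathbb R^d}\Lambda\Big(\frac x\eps,\frac{x-z}{\eps}\Big)\Psi(x,x-z)\,dx-\Theta\Big(\frac z\eps\Big)\int_{\mathbb R^d}\Psi(x,x-z)\,dx\Big|\le\omega(\eps),$$
with $\omega(\eps)\to0$ uniformly in $z$ (the rate depends only on the moduli of continuity of $\Lambda$ and $\Psi$ and on $\mathrm{supp}\,\Psi$). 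Since $\eps^{-d-\alpha}p(\cdot/\eps)$ is uniformly bounded in $L^1(\{|z|\ge\delta'\})$ --- indeed $\eps^{-\alpha}\int_{|w|\ge\delta'/\eps}p(w)\,dw\to(\alpha{\delta'}^{\alpha})^{-1}\int_{S^{d-1}}k\,ds$ by \eqref{M-2} with $\Omega=S^{d-1}$ --- multiplying the last display by $\eps^{-d-\alpha}p(z/\eps)$ and integrating in $z$ shows that \eqref{WeC} reduces to proving, for $F(z):=\int_{\mathbb R^d}\Psi(x,x-z)\,dx$ (smooth, supported in $\{\delta'\le|z|\le\delta^{-1}\}$),
$$\frac{1}{\eps^{d+\alpha}}\int_{\mathbb R^d}p\Big(\frac z\eps\Big)\Theta\Big(\frac z\eps\Big)F(z)\,dz\ \longrightarrow\ \overline\Lambda\int_{\mathbb R^d}\frac{k(z/|z|)}{|z|^{d+\alpha}}F(z)\,dz,\qquad\eps\to0.$$

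To prove this, fix $N\in\mathbb N$ with $N\sqrt d\le K$ (legitimate, since \eqref{M-3} holds for every $K>0$) and partition the shell into cubes $Q$ of side $N\eps$. On each $Q$ the function $F$ deviates from its value at the center by at most $\|\nabla F\|_\infty N\sqrt d\,\eps$, so up to an error that is $O(\eps)$ uniformly it suffices to control $\eps^{-\alpha}\int_{Q/\eps}p(w)\Theta(w)\,dw$; here $Q/\eps$ is a cube of side $N$, a union of $N^d$ full periods of $\Theta$, so $\int_{Q/\eps}(\Theta(w)-\overline\Lambda)\,dw=0$. Writing $\int_{Q/\eps}p(w)(\Theta(w)-\overline\Lambda)\,dw=\int_{Q/\eps}\big(p(w)-\bar p_Q\big)(\Theta(w)-\overline\Lambda)\,dw$ with $\bar p_Q$ the mean of $p$ over $Q/\eps$, and using $\mathrm{diam}(Q/\eps)=N\sqrt d\le K$ while $|w|\ge\delta'/\eps$ for $w\in Q/\eps$, condition \eqref{M-3} gives $\operatorname{osc}_{Q/\eps}p\le\phi_K(\delta'/\eps)\,\bar p_Q$, whence $\big|\int_{Q/\eps}p(w)(\Theta(w)-\overline\Lambda)\,dw\big|\le2\gamma_2\,\phi_K(\delta'/\eps)\int_{Q/\eps}p(w)\,dw$. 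Summing over the cubes and using again the $L^1$ bound, the total error from replacing $\Theta(z/\eps)$ by $\overline\Lambda$ is $O\big(\phi_K(\delta'/\eps)\big)\to0$ as $\eps\to0$. It remains to note that $\eps^{-d-\alpha}p(z/\eps)\,dz$ converges weakly on the shell to $k(z/|z|)|z|^{-d-\alpha}\,dz$ by \eqref{pe} (equivalently \eqref{M-2}), so that $\eps^{-d-\alpha}\overline\Lambda\int p(z/\eps)F(z)\,dz\to\overline\Lambda\int k(z/|z|)|z|^{-d-\alpha}F(z)\,dz$, which completes the argument.

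The main obstacle is precisely this decoupling of the two $\eps$-scale oscillations: the kernel $\eps^{-d-\alpha}p(z/\eps)$ and the periodic weight $\Lambda(x/\eps,y/\eps)$ --- which in the difference variable produces the oscillating factor $\Theta(z/\eps)$ --- both vary on the microscopic scale, and a priori the weak limit of their product need not equal the product of the weak limits. It is exactly hypothesis \eqref{M-3}, the decay at infinity of the local oscillation of $p$ relative to $p$ itself (quantified by $\phi_K$), that makes $p$ nearly constant over unit-size cubes at its own scale once its argument is large, and this is what allows the periodic averaging of $\Lambda$ to proceed independently of $p$. A secondary technical point is to keep the periodic-averaging estimate uniform in the difference variable $z$; this causes no difficulty because on $G_1^\delta$ one stays at a fixed positive distance from the diagonal and inside a fixed bounded set, so the singularity of the limit kernel is never approached.
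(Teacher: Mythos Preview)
Your argument is correct and rests on the same mechanism as the paper's proof: the $L^2$ (in fact $L^\infty$) boundedness of $\eps^{-d-\alpha}p\big(\frac{x-y}{\eps}\big)$ on $G_1^\delta$ reduces the question to smooth test functions, and then condition \eqref{M-3} (via $\phi_K$) is used to show that $p$ is nearly constant on cubes of unit size at its own scale, so that the periodic averaging of $\Lambda$ proceeds independently, after which \eqref{M-2} identifies the weak limit of $\eps^{-d-\alpha}p(\cdot/\eps)$.

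The organization differs from the paper. The paper stays in the $(x,y)$-coordinates, partitions $G_1^\delta\subset\mathbb R^{2d}$ into $\eps$-cubes $I_k(\eps)$, freezes $\Psi$ on each cube, and compares $p\big(\frac{x-y}{\eps}\big)$ with its cube average $\hat p_k$; since each $I_k(\eps)$ is a full period of $\Lambda\big(\frac x\eps,\frac y\eps\big)$, the latter integrates to $\overline\Lambda$ directly. You instead pass to $(x,z)$-coordinates and proceed in two stages: first average in $x$ for fixed $z$, which replaces $\Lambda\big(\frac x\eps,\frac{x-z}{\eps}\big)$ by the partially averaged periodic function $\Theta\big(\frac z\eps\big)$, and then partition the $z$-shell into $N\eps$-cubes to average $\Theta$ against $p$. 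Your route makes the structure of the two oscillations more transparent and works in $\mathbb R^d$ rather than $\mathbb R^{2d}$; the paper's route is more direct and avoids introducing the auxiliary function $\Theta$. Both hinge on exactly the same use of \eqref{M-3}.

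One small remark: your bound $\omega(\eps)$ in the first averaging step does not actually require any continuity of $\Lambda$ (only $\Lambda\in L^\infty$ is assumed in \eqref{Lambda}); the estimate follows from the smoothness of $\Psi$ alone, since on each $\eps$-cube in $x$ one freezes $\Psi$ and then the integral of $\Lambda\big(\frac{\cdot}{\eps},\frac{\cdot-z}{\eps}\big)$ over a full period equals $\eps^d\,\Theta\big(\frac z\eps\big)$ exactly. So the phrase ``moduli of continuity of $\Lambda$'' should be dropped.
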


\begin{proof}
Since $\frac\delta\eps \to \infty$ as $\eps \to 0$, due to condition \eqref{M-1} we have
$$
\int\limits_{G_1^\delta}\bigg(\frac{p\big(\frac {x-y}\eps\big)}{\eps^{d+\alpha}}\bigg)^2dxdy\leqslant
\int\limits_{G_1^\delta}\frac{\beta_2^2}{|x-y|^{2d+2\alpha}}dxdy\leqslant C \delta^{-2d-2\alpha}.
$$
Therefore, for each $\delta>0$, the family $\{\eps^{-d-\alpha}p\big(\frac{x-y}\eps\big)\}$ is bounded
in $L^2(G_1^\delta)$.
Since the set of $C_0^\infty(G_1^\delta)$ functions is dense in $L^2(G_1^\delta)$, it is sufficient to  take in relation \eqref{WeC} a smooth
function  $\Psi(x,y)$ with a compact support in $G_1^\delta$. We show first using assumptions \eqref{M-2} - \eqref{M-3}  that
\begin{equation}\label{WeC-1}
 \frac{1}{\eps^{d+\alpha}} \int\limits_{G_1^\delta}
 p\big(\frac{x-y}{\eps}\big) \Lambda \big(\frac{x}{\eps}, \frac{y}{\eps} \big) \Psi(x,y)\, dx dy \ = \ \frac{(\overline{\Lambda}+ o(1))}{\eps^{d+\alpha}} \int\limits_{G_1^\delta}   p\big(\frac{x-y}{\eps}\big)  \Psi(x,y)\, dx dy, \quad \varepsilon \to 0,
\end{equation}
where $o(1)\to 0$ as $\eps\to0$.

Denote $I_k(\varepsilon) = \varepsilon k + \varepsilon {[-\frac12,\frac12]}^{2d}, \; k \in \mathbb{Z}^{2d}$, and let $(x_k, y_k)$ be the center point of the box $I_k(\varepsilon)$. The set of $k \in \mathbb{Z}^{2d}$ such that
$I_k(\varepsilon)$ has a non-empty intersection with $G_1^\delta$ is denoted by $\mathcal{J}_\delta(\eps)$.
 Then we  define the mean values of $p\big(\frac{x-y}\eps \big)$
over the cubes $I_k(\varepsilon)$ by
$$
\hat p_k=\eps^{-2d}
\int\limits_{I_k(\varepsilon)} p\Big(\frac{x-y}\eps\Big)dxdy = \int\limits_{k+{[-\frac12,\frac12]}^{2d}} p(x-y)dxdy
$$
and introduce the following piece-wise constant function
$$
\hat p_{\eps}(x,y)= \hat p_k, \quad\hbox{if }(x,y)\in I_k(\eps), \quad k\in\mathbb Z^{2d}.
$$
Now the integral on the left hand side of \eqref{WeC-1} can be written as follows:
\begin{equation}\label{WeC-2}
\begin{array}{l}
\displaystyle
 \frac{1}{\eps^{d+\alpha}} \int\limits_{G_1^\delta}
 p\big(\frac{x-y}{\eps}\big) \Lambda \big(\frac{x}{\eps}, \frac{y}{\eps} \big) \Psi(x,y)\, dx dy
\\[3mm] \displaystyle
= \frac{1}{\eps^{d+\alpha}} \sum_{k \in \mathcal{J}_\delta(\eps)} \int\limits_{I_k(\varepsilon)}
 p\big(\frac{x-y}{\eps}\big) \Lambda \big(\frac{x}{\eps}, \frac{y}{\eps} \big) \Psi(x_k,y_k)\, dx dy \big( 1 + o(1) \big)
 \\[3mm] \displaystyle
 =  \frac{1}{\eps^{d+\alpha}} \, \overline{\Lambda} \, \sum_{k \in \mathcal{J}_\delta(\eps)} \Psi(x_k,y_k) \, \hat p_k \, \varepsilon^{2d} \, \big( 1 + o(1) \big)
 \\[2mm] \displaystyle
  +  \frac{1}{\eps^{d+\alpha}}  \sum_{k \in \mathcal{J}_\delta(\eps)}  \Psi(x_k,y_k) \, \int\limits_{I_k(\varepsilon)}
\Big( p\big(\frac{x-y}{\eps}\big)-  \hat p_\eps(x,y)  \Big) \Lambda \big(\frac{x}{\eps}, \frac{y}{\eps} \big) \, dx dy \, \big( 1 + o(1) \big),
 \end{array}
\end{equation}
where $o(1)\to 0$ as $\eps\to0$.
Since $x, \, y \in G_1^\delta$, then $|z| = \big|\frac{x-y}{\varepsilon} \big|> \frac{\delta}{\varepsilon} \to \infty$ as $\varepsilon \to 0$. Thus, taking into account condition \eqref{M-3} with $K = 2\sqrt{d}$ we conclude that for any
$k\in\mathcal{J}_\delta(\eps)$ and for almost all $(x,y) \in I_k(\varepsilon) $ the inequality
\begin{equation}\label{WeC-3}
\big|  p\big(\frac{x-y}{\eps}\big)-  \hat p_\eps(x,y) \big|=\big|  p\big(\frac{x-y}{\eps}\big)-  \hat p_k \big|
\leqslant \phi_K \big(\frac\delta{2\eps}\big)p\big(\frac{x-y}{\eps}\big)
%As a consequence of condition \eqref{M-3}, for almost all $(x,y)\in I_k(\eps)$ we have
%$$
%\big|\hat p_{\eps}(x,y)-p\Big(\frac{x-y}\eps\Big)\big|\leqslant \phi_K().
%$$
%= | p(z_k + \gamma) - p(z_k)| \le \varphi(|z_k|) p(z_k),
\end{equation}
holds for each $\eps<(4d)^{-\frac12}\delta$. Indeed, if $\eps<(4d)^{-\frac12}\delta$ and $k\in\mathcal{J}_\delta(\eps)$, then for almost all $(x,y)\in I_k(\eps)$
we have
$$
\big|  p\big(\frac{x-y}{\eps}\big)-  \hat p_k \big|=\big|  p\big(\frac{x-y}{\eps}\big)-
\eps^{-2d}
\int\limits_{I_k(\varepsilon)} p\Big(\frac{\xi-\eta}\eps\Big)d\xi d\eta \big|
$$
$$
\leqslant \eps^{-2d}\int\limits_{I_k(\varepsilon)}\Big|p\big(\frac{x-y}{\eps}\big)-  p\Big(\frac{\xi-\eta}\eps\Big)\Big|d\xi d\eta \leqslant \phi_K \big(\frac\delta{2\eps}\big)p\big(\frac{x-y}{\eps}\big).
$$
 %where $\varphi(r) \to 0$ as $r \to \infty$.
 Consequently, the last sum in \eqref{WeC-2} can be estimated  as follows:
\begin{equation}\label{WeC-4}
\begin{array}{l}
\displaystyle
\Big| \frac{1}{\eps^{d+\alpha}}  \sum_{k \in \mathcal{J}_\delta(\eps)}  \Psi(x_k,y_k) \, \int\limits_{I_k(\varepsilon)}
\Big( p\big(\frac{x-y}{\eps}\big)-  \hat p_k\Big) \Lambda \big(\frac{x}{\eps}, \frac{y}{\eps} \big) \, dx dy  \Big|
\\[3mm] \displaystyle
 \le \ \frac{ \gamma_2}{\eps^{d+\alpha}} \phi_K \big(\frac\delta{2\eps}\big)  \sum_{k \in \mathcal{J}_\delta(\eps)} %\varepsilon^{2d} \,
| \Psi(x_k,y_k)| \int\limits_{I_k(\varepsilon)}  p\big(\frac{x-y}{\eps}\big)dxdy % \,  \varphi(|\frac{x_k-y_k}{\eps}|),
 \\[3mm] \displaystyle
 \leqslant \frac{ \gamma_2}{\eps^{d+\alpha}} \phi_K \big(\frac\delta{2\eps}\big) \|\Psi\|_{C(\mathbb R^d)}
  \int\limits_{G_1^\delta}  p\big(\frac{x-y}{\eps}\big)dxdy
\end{array}
\end{equation}
with $\phi_K(\frac\delta{2\eps}) \to 0$ as $\eps \to 0$.
Combining relations \eqref{WeC-4} and \eqref{WeC-2} we obtain
\begin{equation}\label{WeC-5}
\begin{array}{l}
\displaystyle
 \frac{1}{\eps^{d+\alpha}} \int\limits_{G_1^\delta}
 p\big(\frac{x-y}{\eps}\big) \Lambda \big(\frac{x}{\eps}, \frac{y}{\eps} \big) \Psi(x,y)\, dx dy
\\[3mm] \displaystyle
=  \frac{1}{\eps^{d+\alpha}} \,  \sum_{k \in \mathcal{J}_\delta(\eps)}  \Psi(x_k,y_k) \, \hat p_k \, \varepsilon^{2d} \, \big(  \overline{\Lambda} + o(1) \big)
\\[3mm] \displaystyle
 = \frac{\big(  \overline{\Lambda} + o(1) \big)}{\eps^{d+\alpha}}  \sum_{k \in \mathcal{J}_\delta(\eps)}  \int\limits_{I_k(\varepsilon)} \Psi(x,y) \, p\big(\frac{x-y}{\eps}\big)\, dx dy \; (1+ o(1))
 \\[3mm] \displaystyle
    =  \frac{\big(  \overline{\Lambda} + o(1) \big)}{\eps^{d+\alpha}} \, \int\limits_{G_1^\delta} p\big(\frac{x-y}{\eps}\big) \,  \Psi(x,y) \, dx dy.
\end{array}
\end{equation}
This yields  relation \eqref{WeC-1}.

On the other hand, condition \eqref{M-2}  implies that  the family of functions $\{  \frac{1}{\eps^{d+\alpha}} p\big(\frac{x-y}{\eps}\big) \}$ converges weakly to $ \frac{k \big( \frac{x-y}{|x-y|} \big)}{|x-y|^{d+\alpha}}$ in $L^2(G_1^\delta)$, see e.g. Proposition 8.3.1. \cite{Kol}. Indeed, since  for any fixed $\delta>0$
the family $\{\frac{1}{\eps^{d+\alpha}} p\big(\frac{x-y}{\eps}\big)\}$ is bounded in $L^2(G_1^\delta)$,
%the density of a measure for any $\varepsilon$, and the limiting measure also has a density w.r.t Lebesgue measure,
we only need to prove that for any $R_1$, $R_2 >R_1$ and an open set $\Omega$ on $S^{d-1}$ we have
% the measures of any open set of $G_1^\delta$ converge as $\varepsilon \to 0$.
$$
\begin{array}{l}
\displaystyle
\int\limits_{R_1<|x-y|<R_2} \; \int\limits_{\tilde z = \frac{x-y}{|x-y|} \in \Omega}  \frac{1}{\eps^{d+\alpha}} \, p\big(\frac{x-y}{\eps}\big) \, dx dy \\[3mm] \displaystyle
= \int\limits_{\frac{R_1}{\varepsilon}<|w|<\frac{R_2}{\varepsilon}} \; \int\limits_{\tilde z \in \Omega}  \frac{1}{\eps^{\alpha}} \, p(w) \, dw \, d \tilde z \ \to \
\int\limits_{R_1}^{R_2} \frac{ d r}{r^{\alpha+1}} \, \int\limits_{\tilde z \in \Omega} k(s) \, d s.
\end{array}
$$
This relation follows from \eqref{M-2}.
Thus, convergence \eqref{WeC} is proved.
\end{proof}

Combining the strong convergence of $u^\eps$ in $L^2_{loc}$ with a weak convergence \eqref{WeC} in Lemma \ref{WeakC} and relation \eqref{B3} we get
\begin{equation}\label{B5}
\begin{array}{l}
\displaystyle
 \frac{1}{\eps^{d+\alpha}} \int\limits_{\mathbb{R}^d} \int\limits_{\mathbb{R}^d}
 p\big(\frac{x-y}{\eps}\big) \Lambda \big(\frac{x}{\eps}, \frac{y}{\eps} \big) ( u^\eps(y) - u^\eps(x) )
 (\varphi(y) - \varphi(x))dx dy  \\
 \displaystyle
 \rightarrow \  \overline{\Lambda} \, \int\limits_{\mathbb{R}^d} \int\limits_{\mathbb{R}^d}
 \frac{ k\big( \frac{x-y}{|x-y|} \big) ( u(y) - u(x) )}{|x-y|^{d+\alpha}} (\varphi(y) - \varphi(x)) dx dy.
 \end{array}
 \end{equation}
%$$
%\int_{\mathbb{R}^d \times \mathbb{R}^d} \frac{\overline{\Lambda}\, \tilde k(s) \, (u(x) - u(y)) (\varphi(x) - %\varphi(y))}{|x-y|^{d+\alpha}} dx dy + \int_{\mathbb{R}^d}(m u \varphi - f \varphi) dx =0.
%$$
Since $\varphi$ is an arbitrary function from $C_0^\infty(\mathbb{R}^d)$, we conclude that $u$ is a solution of equation $-L^0 u + m u = f$.  Due to uniqueness of a solution of this equation, the whole sequence $u^{\eps}$ converges to $u$ in $L^2_{\rm loc}(\mathbb{R}^d)$ as $\varepsilon \to 0$.
%equation $-L^0 u + m u = f$ with
%\begin{equation}\label{L0}
%  L^0 u(x) =  \int\limits_{\mathbb{R}^d} \frac{ \overline{\Lambda} \, \tilde k(s) \, ( u(y) - %u(x))}{|x-y|^{d+\alpha}} dy \quad \mbox{with } \; \overline{\Lambda} = \int\limits_{\mathbb{T}^d \times %\mathbb{T}^d} \Lambda(\xi,\eta) d\xi d\eta.
%\end{equation}
\subsection{Convergence in $L^2(\mathbb{R}^d)$}\label{ss_conv}

It remains to justify the convergence in $L^2(\mathbb{R}^d)$.
We introduce a function $\varphi_L(x)$ as follows:
$$
\varphi_L(x)=\left\{
\begin{array}{ll}
0, &\hbox{if }|x|<L\\
\frac 1L(|x|-L),&\hbox{if }L\leqslant |x|\leqslant 2L,\\
1,&\hbox{otherwise}.
\end{array}
\right.
$$
Our goal is to show that $\|\varphi^\frac12_Lu^\eps\|_{L^2(\mathbb R^d)}\to 0$ as $L\to\infty$ uniformly in $\eps>0$.
To this end we multiply equation \eqref{Ue} by $\varphi_Lu^\eps$ and integrate the resulting relation over $\mathbb R^d$.
This yields
$$
\frac1{\eps^{d+\alpha}}\int\limits_{\mathbb R^{2d}}p\Big(\frac{x-y}\eps\Big)\big(u^\eps(x)-u^\eps(y)\big)
\big(\varphi_L(x)u^\eps(x)-\varphi_L(y)u^\eps(y)\big)dxdy
$$
\begin{equation}\label{int_ide}
+m\int\limits_{\mathbb R^d}\varphi_L(x)(u^\eps(x))^2dx=\int\limits_{\mathbb R^d}\varphi_L(x)f(x)u^\eps(x)dx.
\end{equation}
Clearly,
\begin{equation}\label{small_disc}
\bigg|  \int\limits_{\mathbb R^d}\varphi_L(x)f(x)u^\eps(x)dx\bigg|\leqslant \|u^\eps\|_{L^2(\mathbb R^d)}
\|\varphi_L f\|_{L^2(\mathbb R^d)}\leqslant C\|\varphi_L f\|_{L^2(\mathbb R^d)}\to 0,
\end{equation}
as $L\to\infty$. The first integral in \eqref{int_ide} can be rearranged as follows:
$$
\frac1{\eps^{d+\alpha}}\int\limits_{\mathbb R^{2d}}p\Big(\frac{x-y}\eps\Big)\big(u^\eps(x)-u^\eps(y)\big)
\big(\varphi_L(x)u^\eps(x)-\varphi_L(y)u^\eps(y)\big)dxdy
$$
\begin{equation}\label{fists_nonneg}
=\frac1{\eps^{d+\alpha}}\int\limits_{\mathbb R^{2d}}p\Big(\frac{x-y}\eps\Big)\varphi_L(x)\big(u^\eps(x)-u^\eps(y)\big)^2
dxdy
\end{equation}
$$
+\frac1{\eps^{d+\alpha}}\int\limits_{\mathbb R^{2d}}p\Big(\frac{x-y}\eps\Big)\big(u^\eps(x)-u^\eps(y)\big)\big(\varphi_L(x)-\varphi_L(y)\big)u^\eps(y)
dxdy.
$$
Integral \eqref{fists_nonneg} is non-negative. Let us estimate the second integral on the right-hand side.
$$
\bigg|\frac1{\eps^{d+\alpha}}\int\limits_{\mathbb R^{2d}}p\Big(\frac{x-y}\eps\Big)\big(u^\eps(x)-u^\eps(y)\big)\big(\varphi_L(x)-\varphi_L(y)\big)u^\eps(y)
dxdy\bigg|\leqslant
$$
$$
\leqslant
\bigg|\frac1{\eps^{d+\alpha}}\int\limits_{|x-y|<M \eps}p\Big(\frac{x-y}\eps\Big)\big(u^\eps(x)-u^\eps(y)\big)\big(\varphi_L(x)-\varphi_L(y)\big)u^\eps(y)
dxdy\bigg|+
$$
$$
+\bigg|\frac1{\eps^{d+\alpha}}\int\limits_{|x-y|>M \eps}p\Big(\frac{x-y}\eps\Big)\big(u^\eps(x)-u^\eps(y)\big)\big(\varphi_L(x)-\varphi_L(y)\big)u^\eps(y)
dxdy\bigg|=I_1+I_2.
$$
Using  the fact that $|\varphi_L(x)-\varphi_L(y)|\leqslant \frac1L|x-y|$, we obtain
$$
I_1\leqslant \Big(\frac1{\eps^{d+\alpha}}\int\limits_{\mathbb R^d}p\Big(\frac{x-y}\eps\Big)\big(u^\eps(x)-u^\eps(y)\big)^2
dxdy\Big)^\frac12\, \Big(\frac{M^2\eps^2}{L^2 \eps^{d+\alpha}}\int\limits_{|x-y|<M\eps}p\Big(\frac{x-y}\eps\Big)
(u^\eps(y))^2dxdy\Big)^\frac12
$$
$$
\leqslant CL^{-1}M\eps \Big(\frac1{\eps^{d+\alpha}}\int\limits_{|z|<M\eps}p\Big(\frac{z}\eps\Big)
(u^\eps(y))^2dzdy\Big)^\frac12\leqslant CL^{-1}M\eps^{1-\frac\alpha2}
$$
The integral $I_2$ admits the following estimate:
 $$
 I_2\leqslant
 \bigg(\frac1{\eps^{d+\alpha}}\int\limits_{|x-y|>M \eps}p\Big(\frac{x-y}\eps\Big)\big(u^\eps(x)-u^\eps(y)\big)^2
dxdy\bigg)^\frac12
\bigg(\beta_2\int\limits_{|x-y|>M \eps}\frac{\big(\varphi_L(x)-\varphi_L(y)\big)^2u^\eps(y)^2}{|x-y|^{d+\alpha}}
dxdy\bigg)^\frac12
 $$
 $$
 \leqslant C
 \Big(\int\limits_{\mathbb R^d}\frac{\big(\varphi_L(x)-\varphi_L(y)\big)^2 (u^\eps(y))^2}{|x-y|^{d+\alpha}}
dxdy\Big)^\frac12
 $$
Since $|\varphi_L(x)-\varphi_L(y)|\leqslant \min\{\frac1L|x-y|, 1\}$, we have
$$
\int\limits_{\mathbb R^d}\frac{\big(\varphi_L(x)-\varphi_L(y)\big)^2 (u^\eps(y))^2}{|x-y|^{d+\alpha}}
dxdy
\leqslant\int\limits_{|x-y|<L}\frac{|x-y|^2 (u^\eps(y))^2}{L^2|x-y|^{d+\alpha}}dxdy+
\int\limits_{|x-y|>L}\frac{ (u^\eps(y))^2}{|x-y|^{d+\alpha}}dxdy
$$
 $$
 \leqslant C\int\limits_{|z|<L}\frac{|z|^2 }{L^2|z|^{d+\alpha}}dz+
 \int\limits_{|z|>L}\frac1{|z|^{d+\alpha}}dz\leqslant C L^{-\alpha}
 $$
 Therefore,
 $$
 I_2\leqslant C L^{-\frac\alpha2}
 $$
 Combining this inequality with the estimate for $I_1$ we conclude that
 $$
\bigg|\frac1{\eps^{d+\alpha}}\int\limits_{\mathbb R^{2d}}p\Big(\frac{x-y}\eps\Big)\big(u^\eps(x)-u^\eps(y)\big)\big(\varphi_L(x)-\varphi_L(y)\big)u^\eps(y)
dxdy\bigg|\leqslant CL^{-\frac\alpha2}.
$$
 Considering \eqref{int_ide}, \eqref{small_disc} and the positiveness of integral \eqref{fists_nonneg} we finally deduce that
 $$
 \lim\limits_{L\to\infty}\sup\limits_{\eps\in(0,1]}\int_{\mathbb R^2}\varphi_L(x)(u^\eps(x))^2 dx=0.
 $$
 This yields the desired convergence of $u^\eps$ to $u$ in $L^2(\mathbb R^d)$ and
 completes the proof of Theorem.
\end{proof}

\end{document}